\newtheorem{theorem}{Theorem}[section]
\newtheorem{lemma}[theorem]{Lemma}
\newtheorem{corollary}{Corollary}[section]
\theoremstyle{definition}
\newtheorem{example}[theorem]{Example}
\theoremstyle{remark}
\newtheorem{remark}[theorem]{Remark}
\numberwithin{equation}{section}
\begin{document}

\title{Modules over semisymmetric quasigroups}


\author{Alex W. Nowak}
\address{Department of Mathematics, Iowa State University, Ames, Iowa, 50011}
\curraddr{}
\email{anowak@iastate.edu}
\thanks{}


\subjclass[2010]{Primary 20N05. Secondary 	20C07}

\date{November 15, 2017}

\begin{abstract}
The class of semisymmetric quasigroups is determined by the identity $(yx)y=x.$  We prove that the universal multiplication group of a semisymmetric quasigroup $Q$ is free over its underlying set and then specify the point-stabilizers of an action of this free group on $Q$.  A theorem of Smith indicates that Beck modules over semisymmetric quasigroups are equivalent to modules over a quotient of the integral group algebra of this stabilizer.  Implementing our description of the quotient ring, we provide some examples of semisymmetric quasigroup extensions.  Along the way, we provide an exposition of the quasigroup module theory in more general settings.  
\end{abstract}

\maketitle



\section{Introduction}
\noindent A \emph{quasigroup} $(Q, \cdot, /, \backslash)$ is a set $Q$ equipped with three binary operations: $\cdot$ denoting multiplication, $/$ right division, and $\backslash$ left division; these operations satisfy the identities
\begin{align*}
 \text{(IL)}\phantom{A}y\backslash(y\cdot x)=x; && \text{(IR)}\phantom{A}x= (x\cdot y)/y;\\
 \text{(SL)}\phantom{A}y\cdot(y\backslash x)=x; && \text{(SR)}\phantom{A} x= (x/y)\cdot y.
\end{align*}
Whenever possible, we convey multiplication of quasigroup elements by concatenation.\\
\indent As a class of algebras defined by identities, quasigroups constitute a variety (in the universal-algebraic sense), denoted by $\mathbf{Q}$.  Moreover, we have a category of quasigroups (also denoted $\mathbf{Q}$), the morphisms of which are quasigroup homomorphisms respecting the three operations.   \\
\indent If $(G, \cdot)$ is a group, then defining $x/y:=x\cdot y^{-1}$ and $x\backslash y:=x^{-1}\cdot y$ furnishes a quasigroup $(G, \cdot, /, \backslash)$.  That is, all groups are quasigroups, and because of this fact, constructions in the representation theory of quasigroups often take their cue from familiar counterparts in group theory.  Transporting group modules into a nonassociative setting seems particularly cumbersome, as the specification of a $G$-module $M$ via the structure map $\rho:G\to \text{Aut}(M)$ relies on the associativity of automorphism composition.  However, in \cite{infgrp}, Smith develops an approach to quasigroup module theory that avoids this complication.\\
\indent We have two equivalent interpretations of quasigroup modules (appearing below as Theorems \ref{FTQR} and \ref{RFTQR}; cf. \cite{IQTR}, Section 10.3), and while the focus of this paper will be on the more concrete of the two, we now review the category-theoretical definition, as it best illustrates the connection to group representation theory.  Indeed, notice that a right $G$-module $M$ gives rise to the split extension $E=G\tensor*[]{\ltimes}{}M$ built upon the set $G\times M$ with product $$(g_1, m_1)(g_2, m_2)=(g_1g_2, m_1g_2+m_2),$$ accompanied by projection $\pi: (g, m)\mapsto g$ and section $\eta:g\mapsto (g, 0).$  The former map $\pi:E\to G$ may be construed as an object of the slice category $\mathbf{Gp}/G$ of group homomorphisms over $G$.  Indeed, $\eta:G\to E$ constitutes a zero map, $$+_G:E\times_G E\to E; ((g, m_1), (g, m_2))\mapsto (g, m_1+m_2)$$ from the pullback in $\mathbf{Gp}$ over $G$ into $E$ plays the role of addition, and the map $-_G:E\to E; (g, m)\mapsto(g, -m)$ provides negation.  Conversely, if one starts with an abelian group object $\pi:E\to G$ in $\mathbf{Gp}/G$ with addition $+:E\times_G E\to E$, negation $-:E\to E$, and zero map $\eta:G\to E,$ then $M=\text{Ker } \pi$ is an abelian group because $+$ satisfies the commutativity axiom for abelian group objects.  Moreover, $G$ acts on $M$ via conjugation: $$\rho_g:M\to M; m\mapsto  (g^\eta)^-mg^\eta$$ (concatenation represents multiplication in the group $E$).  \\
\indent The equivalence of $G$-modules and abelian group objects in $\mathbf{Gp}/G$ is an example of a more general construction due to Beck \cite{Beck}.  In fact, if $\mathbf{C}$ is a category with pullbacks and $Q$ is an object of $\mathbf{C}$, a \emph{Beck module} over $Q$ is defined to be an abelian group object in the slice category $\mathbf{C}/Q$ of $\mathbf{C}$-morphisms over $Q$.  The category of quasigroups $\mathbf{Q}$, as a category of universal algebras defined by identities, is bicomplete, and, thus, has pullbacks.  In fact, any subvariety $\mathbf{V}$ of $\mathbf{Q}$ specified by identities (in addition to the quasigroup axioms) has, when construed as a category, pullbacks.  It is with this fact at our disposal that, given a quasigroup $Q$ in a variety $\mathbf{V}$, we define a $Q$\emph{-module in} $\mathbf{V}$ to be an abelian group object in the slice category $\mathbf{V}/Q$ of $\mathbf{V}$-morphisms over $Q$.  \\
\indent The purpose of this paper is to describe modules over \emph{semisymmetric quasigroups}, those which satisfy the identity $(yx)y=x$ and thus constitute a category that we denote by $\mathbf{P}$.  Abelian group objects in $\mathbf{P}/Q$ are equivalent to modules over the quotient of an integral group ring.  An explicit description of this ring is our main result, appearing as Theorem \ref{mainresult}.  Before arriving at this theorem, we devote Section 2 to a general discussion of semisymmetric quasigroups; we provide some small examples and explain the significance of semisymmetry in two applications of nonassociative algebra.  The purpose of Section 3 is to introduce the reader to \emph{universal multiplication groups}, variety-dependent objects that associate, in a manner that is functorial, a quasigroup $Q$ to a group of permutations on a free extension of $Q$ over an indeterminate.  Theorem \ref{unimult} is our first result.  We prove that the universal multiplication group of a semisymmetric quasigroup is free over the its underlying set.  Theorem \ref{ustab} then describes a basis for the stabilizer of points in $Q$ under an action of the universal multiplication group; the representation ring for semisymmetric quasigroups is a quotient of the integral group algebra of this \emph{universal stabilizer}.  Establishing Theorem \ref{mainresult} then becomes a matter of determining the ideal by which we must quotient.  Towards this end, we employ techniques of combinatorial differentiation of quasigroup words established by Smith in \cite{IQTR}. We conclude with some examples that illustrate the utility of Theorem \ref{mainresult} in computing extensions of semisymmetric quasigroups.

\section{Semisymmetric quasigroups}
We say that a quasigroup $(Q, \cdot, /, \backslash)$ is \emph{semisymmetric} if it satisfies the following equivalent identities:
\begin{equation}
\label{semisymmid1}
(yx)y=x;
\end{equation}
\begin{equation}
\label{semmisymmid2}
y(xy)=x;
\end{equation}
\begin{equation}
\label{semisymmid3}
y\backslash x= xy;
\end{equation}
\begin{equation}
\label{semisymmid4}
y/x=xy.
\end{equation}
Let \eqref{semisymmid1} serve as an equational basis of the variety of semisymmetric quasigroups $\mathbf{P}$ relative to the variety of quasigroups $\mathbf{Q}$.  
\begin{remark}
An immediate consequence of the equality of \eqref{semisymmid1}-\eqref{semisymmid4} is that, for semisymmetric quasigroups, the operations $/$ and $\backslash$ coincide with the opposite of multiplication.  
\end{remark}
\subsection{Quasigroup homotopy and semisymmetrization}
Associated with any quasigroup $(Q, \cdot, /, \backslash)$ is its \emph{semisymmetrization}, a semisymmetric quasigroup defined on the set $Q^3$ via the product
\begin{equation}
\label{semisymmetrization}
(x_1, x_2, x_3)(y_1, y_2, y_3)=(y_3/x_2, y_1 \backslash x_3, x_1\cdot y_2).
\end{equation}
These semisymmetrizations are connected to the notion of quasigroup \emph{homotopy}; we declare a triple of functions $(f_1, f_2, f_3):Q\to P$ between quasigroups $Q$ and $P$ to be a homotopy if $xf_1\cdot yf_2=(xy)f_3$, whenever $x, y\in Q.$\footnote{The notion of homotopy gains traction over that of homomorphism in applications of quasigroups to geometry.  Indeed, quasigroup multiplications coordinatize $3$-nets, and in this web-geometric setting, the notion of homomorphism is too strong to describe properly the preservation and translation of properties between nets.}  Such triples constitute the morphism class of the category $\mathbf{Qtp}$ of homotopies between quasigroups.  There is a forgetful functor $\Sigma:\mathbf{P}\to \mathbf{Qtp}$, sending each homomorphism $f: Q\to P$ of semisymmetric quasigroups to the triple $(f, f, f):Q\to P.$  In \cite{HASoQ}, Smith demonstrates that $\Sigma$ is left adjoint to the process of semisymmetrization.  In order to make the construction given by \eqref{semisymmetrization} functorial, we observe that if $(f_1, f_2, f_3):Q\to P$ is a homotopty of quasigroups, then the map $Q^3\to P^3; (x_1, x_2, x_3)\mapsto (x_1f_1, x_2f_2, x_3f_3)$ is a homomorphism of semisymmetric quasigroups (for proof of functoriality, see \cite{HASoQ}, sec 4).  We denote the semisymmetrization functor by $\Delta: \mathbf{Qtp}\to \mathbf{P}.$  

\subsection{Mendelsohn triple systems}
Combinatorial design theory further motivates an examination of semisymmetry in quasigroups.  Indeed, a \emph{Mendelsohn triple system} (MTS) $(P, \mathcal{B})$ consists of a finite point set $P$, together with a collection $\mathcal{B}$ of \emph{triplets} of points with the property that any ordered pair of distinct points belongs to exactly one triplet.  For example, let $(x\phantom{\mid} y\phantom{\mid} z)$ be a prototypical member of $\mathcal{B}$; notice that $(x, y),$ $(y, z),$ and $(z, x)$ are the only ordered pairs appearing in the triple.  Moreover, no distinction should be made between $(x\phantom{\mid} y\phantom{\mid} z),$ $(y\phantom{\mid} z\phantom{\mid} x),$ and $(z\phantom{\mid} x\phantom{\mid} y).$   
\begin{example}
\label{MTSex}
There exist MTS of orders 3 and 4.  Let $P=\{e, a, b\}$.  Then $\mathcal{B}=\{(e\phantom{\mid}a\phantom{\mid}b), (e\phantom{\mid}b\phantom{\mid}a)\}$ makes $(P, \mathcal{B})$ an MTS.\\  
If $P^\prime=\{e, a, b, c\}$, then $\mathcal{B}^\prime=\{(e\phantom{\mid}a\phantom{\mid}b), (e\phantom{\mid}c\phantom{\mid}a), (a\phantom{\mid}c\phantom{\mid}b), (c\phantom{\mid}e\phantom{\mid}b)\}$ produces a $4$-element MTS $(P^\prime, \mathcal{B}^\prime).$   
\end{example}
Mendelsohn triple systems give rise to idempotent, semisymmetric quasigroups on the underlying point set.  Mendelsohn proves this in \cite{MTS}.  Indeed, suppose $(P, \mathcal{B})$ is an MTS.  Define a binary multiplication $\cdot$ as follows: $x\cdot x=x$ for all $x\in P,$ and if $x, y\in P$ are distinct, then $x\cdot y=z$ if and only if $(x\phantom{\mid}y\phantom{\mid}z)\in \mathcal{B}.$  Let $\circ:P^2\to P;(x, y)\mapsto y\cdot x$ be the opposite of this multiplication.  Then $(P, \cdot, \circ, \circ)$ is an idempotent semisymmetric quasigroup.  That is, for distinct $x, y\in P,$ there are distinct blocks $(x\phantom{\mid} y\phantom{\mid} z), (y\phantom{\mid}x\phantom{\mid}z^\prime)\in \mathcal{B}$, and thus the quasigroup identities 
\begin{align*}
\text{(IL)}\phantom{A} y\circ(yx)=yx\cdot y=z^\prime y=x, && \text{(IR)}\phantom{A} (xy)\circ y=y\cdot xy=yz=x,\\ 
\text{(SL)}\phantom{A} y\cdot (y\circ x)=y\cdot xy=yz=x, && \text{(SR)}\phantom{A} (x\circ y)y=yx\cdot y=z^\prime y=x.
\end{align*}
are satisfied.  Idempotence of $\cdot$ and $\circ$ ensure the quasigroup identities are satisfied in the case that $x=y.$  Note too that the semisymmetric identity \eqref{semisymmid1} is verified in (IL).\\
\indent Conversely, if one has an idempotent semisymmetric quasigroup $(P, \cdot, /, \backslash)$ of order at least $3$, we can build an MTS.  Take the ordered pair $(x, y)\in P^2$ of distinct elements.  If $xy=x,$ then $y=(xy)x=x^2=x,$ a contradiction, so $xy\neq x.$  Similarly, we know $xy\neq y.$  Declare $xy=z,$ and $(x\phantom{\mid}y\phantom{\mid}z)$ becomes a cyclic triple of distinct points.  Hence, $(P, \mathcal{B})$ realizes an MTS, with $\mathcal{B}=\{(x\phantom{\mid}y\phantom{\mid} xy)\mid x\neq y\in P\}$.  \\
\indent The above discussion establishes the following theorem.        
\begin{theorem}
There is a one-to-one correspondence between Mendelsohn triple systems and idempotent, semisymmetric quasigroups of order at least $3$.    
\end{theorem}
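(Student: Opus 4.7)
The plan is to verify that the two constructions spelled out in the preceding discussion are mutually inverse, thereby promoting the informal passage into a genuine bijection between MTS and idempotent semisymmetric quasigroups of order at least $3$. The preceding paragraphs already check that starting from an MTS $(P,\mathcal{B})$ the multiplication $x\cdot x = x$, $x\cdot y = z \iff (x\ y\ z)\in\mathcal{B}$ satisfies the quasigroup axioms (IL)--(SR), is idempotent, and realizes identity \eqref{semisymmid1}; they also check that in an idempotent semisymmetric quasigroup of order $\geq 3$, the product $xy$ of distinct elements is distinct from both $x$ and $y$. What remains for the forward direction is really only the observation that $\mathcal{B}$ determines $\cdot$ uniquely via the defining property that each ordered pair of distinct points lies in exactly one triplet.

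For the reverse direction, starting from an idempotent semisymmetric quasigroup $(P,\cdot,/,\backslash)$ of order at least $3$, I would set $\mathcal{B} = \{(x\ y\ xy) \mid x\neq y \in P\}$ and check two things. First, the cyclic invariance that allows us to regard $(x\ y\ xy)$, $(y\ xy\ x)$, and $(xy\ x\ y)$ as a single triplet: this is precisely the content of identities \eqref{semisymmid1} and \eqref{semmisymmid2}, which give $(xy)x = y$ and $y(xy) = x$. Second, that each ordered pair $(u,v)$ of distinct points belongs to exactly one triplet; existence is clear from the block $(u\ v\ uv)$, and uniqueness follows because the cyclic shifts of a block $(x\ y\ z)$ contain the ordered pairs $(x,y)$, $(y,z)$, $(z,x)$, each of which recovers the block via the multiplication.

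Finally I would verify that the two assignments are mutually inverse. Starting from $(P,\mathcal{B})$, passing to the quasigroup, and then back to an MTS returns the collection $\{(x\ y\ xy)\mid x\neq y\}$, which is exactly $\mathcal{B}$, since a triplet $(x\ y\ z)\in\mathcal{B}$ is by construction the block $(x\ y\ x\cdot y)$. Conversely, beginning with an idempotent semisymmetric quasigroup, building $\mathcal{B}$ and then re-extracting the multiplication gives back $x\cdot y$ for $x\neq y$ (directly from the definition) and $x\cdot x = x$ (from idempotence), so the original operation is recovered.

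The main obstacle, although it is light, is keeping the bookkeeping for cyclic triplets straight: one must confirm that the three cyclic presentations of a block all produce the same multiplication entries, and this is exactly the role played by semisymmetry. Once that point is dispatched, the remaining verifications are routine substitutions using \eqref{semisymmid1}--\eqref{semisymmid4} and idempotence.
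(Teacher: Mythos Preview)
Your proposal is correct and follows the same route as the paper: the paper's proof is precisely the discussion preceding the theorem, which verifies the two constructions you describe, and then declares ``The above discussion establishes the following theorem.'' You go somewhat further than the paper by explicitly checking the MTS axioms (cyclic invariance and uniqueness of the block containing a given ordered pair) in the reverse direction and by verifying that the two assignments are mutual inverses---details the paper leaves to the reader---but the underlying argument is identical.
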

\begin{example}
The designs of Example \ref{MTSex} appear in Figure 1 as quasigroup multiplication tables.
\end{example}

\begin{figure}
\label{MTStables}
\begin{tabular}{|c||c|c|c|}
\hline
$(P, \cdot)$ & $e$ & $a$ & $b$\\
\hline
\hline
$e$ & $e$ & $b$ & $a$\\
\hline
$a$ & $b$ & $a$ & $e$\\
\hline
$b$ & $a$ & $e$ & $b$\\
\hline
\end{tabular}
\quad
\begin{tabular}{|c||c|c|c|c|}
\hline
$(P^\prime, \cdot)$ & $e$ & $a$ & $b$ & $c$\\
\hline
\hline
$e$ & $e$ & $b$ & $c$ & $a$\\
\hline
$a$ & $c$ & $a$ & $e$ & $b$\\
\hline
$b$ & $a$ & $c$ & $b$ & $e$\\
\hline
$c$ & $b$ & $e$ & $a$ & $c$  \\
\hline
\end{tabular}
\caption{Mendelsohn triple systems of orders 3 and 4}
\end{figure}

\label{MTSs}
By examining idempotent, semisymmetric extensions of partial Latin squares, Mendelsohn was able to prove the following:
\begin{theorem}
A Mendelsohn triple system of order $n$ exists for all $n\neq 2\mod 3$ except for $n=1$ and $n=6$.   
\end{theorem}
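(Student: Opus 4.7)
The plan is to establish the divisibility constraint ruling out $n \equiv 2 \pmod 3$, exhibit constructions covering every admissible order other than six, and isolate $n=6$ as a genuine negative case. For necessity, fix an MTS $(P, \mathcal{B})$ of order $n$. Each triplet $(x\,y\,z) \in \mathcal{B}$ accounts for exactly the three ordered pairs $(x,y), (y,z), (z,x)$, while every ordered pair of distinct elements appears in exactly one block, so double-counting gives $3|\mathcal{B}| = n(n-1)$. This forces $3 \mid n(n-1)$, hence $n \equiv 0$ or $1 \pmod 3$. The case $n=1$ is excluded because the correspondence of the preceding theorem delivers idempotent semisymmetric quasigroups only for $n \geq 3$, and an empty block-set is ruled out by the definition of an MTS.

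For sufficiency, I would split the admissible orders by residue modulo $6$. When $n \equiv 1, 3 \pmod 6$ a Steiner triple system of order $n$ exists, and orienting each unordered triple in its two cyclic directions doubles the block count from $n(n-1)/6$ to $n(n-1)/3$, producing an MTS. The case $n=4$ is handled directly by Example \ref{MTSex}. For the remaining even admissible orders $n \equiv 0, 4 \pmod 6$ with $n \geq 10$, direct difference-method constructions on cyclic groups handle infinite subfamilies, and a recursive product combines idempotent semisymmetric quasigroups of orders $m$ and $k$ into one of order $mk$ via the correspondence above. Mendelsohn carries out the full argument in \cite{MTS} by extending partial Latin squares one row at a time under the idempotence and semisymmetry constraints.

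The main obstacle is the negative result at $n=6$: such an MTS would require $|\mathcal{B}| = 10$ and each point to lie in exactly five triplets. Through the above correspondence I would translate this to a would-be $6 \times 6$ Latin square satisfying $x^2=x$ and $(yx)y=x$, fix a single row, and trace the semisymmetric identity through the remaining cells, showing by pigeonhole that every attempted completion forces a repetition in some row or column. This finite case analysis, rather than the recursive construction of large MTS, is the genuinely hard step, and it is where Mendelsohn's original paper expends the most care.
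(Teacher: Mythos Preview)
The paper does not prove this theorem; it states the result and attributes it to Mendelsohn \cite{MTS}, remarking only that the argument proceeds ``by examining idempotent, semisymmetric extensions of partial Latin squares.'' Your outline therefore goes well beyond anything the paper itself supplies, and there is no paper-internal proof against which to compare it.

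A few comments on the sketch nonetheless. The necessity argument via $3|\mathcal{B}|=n(n-1)$ is correct. Your handling of $n=1$, however, is not: the restriction of the preceding correspondence to orders at least $3$ says nothing about whether a one-point MTS exists, and nothing in the paper's definition forbids $\mathcal{B}=\varnothing$ when there are no ordered pairs of distinct points to cover. The exclusion of $n=1$ in the theorem statement is a convention ruling out the degenerate system, not a consequence of any argument you have given. For sufficiency, orienting Steiner triple systems covers $n\equiv 1,3\pmod 6$ and recursive products reach many composite orders, but the genuinely delicate cases are the small even admissible orders not accessible by these moves; you defer those to \cite{MTS}, which is honest but leaves the sketch far from self-contained. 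Finally, your assertion that the nonexistence at $n=6$ is ``where Mendelsohn's original paper expends the most care'' inverts the true situation: the $n=6$ impossibility is a bounded case-check, whereas the constructive existence for all remaining admissible orders is the substantial and technically demanding part of Mendelsohn's argument.
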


For our own purposes, we draw the following:

\begin{corollary}
Let $n\neq 2 \mod 3$ be a positive integer.  If $n\neq 6$, then there is an idempotent, semisymmetric quasigroup of order $n$. 
\end{corollary}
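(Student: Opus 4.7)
The plan is almost entirely to chain together the two preceding theorems. Given $n$ with $n \not\equiv 2 \pmod{3}$ and $n \neq 6$, the hypotheses of Mendelsohn's existence theorem are met as soon as $n \geq 3$, so one obtains a Mendelsohn triple system $(P, \mathcal{B})$ with $|P| = n$. Feeding this into the correspondence theorem (which requires order at least $3$) produces an idempotent semisymmetric quasigroup on $P$. This dispatches every allowed $n$ with $n \geq 3$.

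The only residual case is $n = 1$, which is permitted by the corollary's hypothesis but excluded from the MTS existence result (there are no ordered pairs of distinct points on a singleton, so the MTS machinery is vacuous). I would handle this case by hand: the one-element set $\{e\}$ with $e \cdot e = e$ is a quasigroup, it is idempotent by definition, and semisymmetry $(yx)y = x$ holds vacuously since both sides collapse to $e$.

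Thus the main obstacle here is really terminological housekeeping — making sure the case $n = 1$, which is admitted by the corollary but absent from Mendelsohn's theorem, is not overlooked. Everything else is a direct invocation of the preceding two theorems, and no further computation is needed.
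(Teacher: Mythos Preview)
Your proposal is correct and matches the paper's intent: the corollary is stated there without proof, as an immediate consequence of the two preceding theorems, and your argument is exactly that chaining. Your explicit treatment of the $n=1$ case is a welcome bit of care that the paper leaves implicit.
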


\section{Multiplication groups}

In a quasigroup $Q$, fix an element $q$.  The identities (IL) and (SL) guarantee that the map $L(q):Q\to Q; x\mapsto qx$ is an element of the permutation group $Q!$; similarly, (IR) and (SR) place $R(q)\in Q!,$ where $R(q): x\mapsto xq.$  Let $R(Q)=\{R(q)\mid q\in Q\}$ and $L(Q)=\{L(q)\mid q\in Q\}.$  The subgroup of $Q!$ generated by $R(Q)\cup L(Q)$, denoted by $\text{Mlt(Q)},$ is the $\emph{combinatorial multiplication group}$ of $Q$.  It is worth noting that $R(q)^{-1}:x\mapsto x/q$, while $L(q)^{-1}:x\mapsto q\backslash x.$  \\
\indent If $P$ is a subquasigroup of $Q$, then the \emph{relative multiplication group} of $P$ in $Q$, $\text{Mlt}_Q(P),$ is the subgroup of $Q!$ generated by $\{R(p), L(p)\mid p\in P\}$.

\subsection{Universal multiplication groups}
In building up our definition of a quasigroup module, we are interested in the action of quasigroup multiplications on $Q[X],$ an object which, as the bracketing indicates, is analogous to a ring of polynomials over an indeterminate $X$. \\
\indent Let $\mathbf{V}$ be a variety of quasigroups.  As a category of algebras defined by identities, $\mathbf{V}$ is bicomplete and permits the specification of free objects. For a $\mathbf{V}$-quasigroup, we define $Q[X]$ to be the coproduct of $Q$ with $Q^{\mathbf{V}}_X,$ the free $\mathbf{V}$-quasigroup on the singleton $\{X\}$.  This object comes with insertions $\iota_Q:Q\to Q[X]$ and $\iota_X:Q^{\mathbf{V}}_{X}\to Q[X].$  Furthermore, for any $\mathbf{V}$-quasigroup homomorphism $f:Q\to P$, and element $p\in P$, there is a unique morphism $f_p:Q[X]\to P$ of $\mathbf{V}$ so that $f=\iota_Qf_p$.  This universal property is best illustrated via the combination of coproduct and free object diagrams below: \\     
\begin{displaymath}
\xymatrix{
Q \ar[r]^{\iota_Q} \ar[dr]_f & Q[X] \ar@{-->}[d]_{f_p} & Q^{\mathbf{V}}_{X}\ar[l]_{\iota_X} \ar@{-->}[dl]^{\bar{p}} \\
 & P & \{X\}.\ar@{^{(}->}[u] \ar[l]^p
}
\end{displaymath}

Just as a polynomial ring $R[X]$ contains $R$-linear combinations over an indeterminate, $Q[X]$ consists of quasigroup-operational combinations of the elements of $Q$ over the indeterminate $X$.  
\begin{example}
Consider the quasigroup $P$ of Example \ref{MTSs}.  Now $P$ is semisymmetric; consider $P[X]$ in the category of semisymmetric quasigroups $\mathbf{P}$.  We list some words in $P[X]$:
\begin{align*}
(eX)e=e(Xe)=X;\\
(eX)(eb)=(eX)a;\\
b((XX^2)X)=bX^2.
\end{align*}
In order to see the variety-dependence of this construction, recall that $P$ is also idempotent.  Let $\mathbf{MTS}$ denote the category of semisymmetric, idempotent quasigroups.  If we take $P[X]$ in $\mathbf{MTS},$ then the third word above further reduces: $b((XX^2)X)=bX^2=bX$.
\end{example}

\begin{remark} 
\label{reductions}
We note that words in $Q[X]$ (coproduct taken in $\mathbf{P}$) do have a normal form (see \cite{ENFTR}), and it is based partially on an ordering of quasigroup operations $\{\cdot</<\backslash\}$.  Lesser operations are given preferential treatment.  That is to say, if $u$ and $v$ are fully reduced words in $Q[X]$, then $uv$ will represent $v/u=v\backslash u=uv$.
\end{remark}

As it turns out, the insertion $\iota_Q:Q\to Q[X]$ is a monomorphism; this is clear if $Q$ is empty, and if $Q$ is nonempty, take $f_q:Q[X]\to Q$, for some $q\in Q$, as a retraction of the identity $1_Q:Q\to Q$; indeed, $\iota_Qf_q=1_Q$.  The key to this observation is that it tells us $Q[X]$ contains an isomorphic copy of $Q$.  Therefore, we may define the relative multiplication group $\text{Mlt}_{Q[X]}(Q),$ and this group is the $\emph{universal multiplication}$ group of $Q$ in $\mathbf{V}.$  This construction is functorial (see \cite{IQTR}, Prop 2.8); we denote this functor $U(Q; \mathbf{V}):\mathbf{V}\to \mathbf{Gp}$, but when the specific variety $\mathbf{V}$ and quasigroup $Q$ are clear from context or irrelevant, we'll let $U(Q; \mathbf{V})=\widetilde{G}.$  Moreover, we shall use $\widetilde{R}(q)$ and $\widetilde{L}(q)$ to denote multiplications acting on the set $Q[X]$ as opposed to $Q$.  As a final notational remark, we note that when we want to avoid committing to left or right multiplication, $\widetilde{E}(q)^{\varepsilon}$ will be used for generic length-one words of $\widetilde{G}$, where $\widetilde{E}(q)$ is $\widetilde{R}(q)$ or $\widetilde{L}(q)$, and $\varepsilon=\pm1.$

\begin{example}
\label{gpumult}
Given a group $Q$ in the variety $\mathbf{Gp}$, the structure of the universal multiplication group $\widetilde{G}=U(Q; \mathbf{Gp})$, is captured by the isomorphism $$T:Q\times Q\to \widetilde{G}; (g, h)\mapsto \widetilde{L}(g)^{-1}\widetilde{R}(h).$$
\end{example}

\begin{remark}
It had been previously shown that $U(Q; \mathbf{Q})$ is free on the disjoint union $R(Q)+L(Q)$ of left and right multiplications (see \cite{IQTR}, Th. 2.2).  For semisymmetric quasigroups, the coincidence of left and right divisions with the opposite of multiplication ensure that $U(Q; \mathbf{P})$ is free on $\widetilde{R}(Q).$  This notion is formalized in the proofs that follow.  
\end{remark}

\begin{lemma}
\label{lemma1}
Let $Q$ be a semisymmetric quasigroup, $QG$ the free group on the set $Q$, and $Q[X]$ the $\mathbf{P}$-coproduct of $Q$ with $Q^{\mathbf{P}}_X$.  Consider the unique group homomorphism arising out of $q\mapsto \widetilde{R}(q);$ that is,
\begin{equation}
\label{iso1}
R:QG\to \widetilde{G}; q_1^{\varepsilon_1}\cdots q_R^{\varepsilon_r}\mapsto \widetilde{R}(q_1)^{\varepsilon_1}\cdots \widetilde{R}(q_r)^{\varepsilon_r},
\end{equation}
where each $\varepsilon_i=\pm1.$ If $w=q_1^{\varepsilon_1}\cdots q_r^{\varepsilon_r}$ is a fully reduced word in $QG$, then so is $Xw^R=X(\widetilde{R}(q_1)^{\varepsilon_1}\cdots \widetilde{R}(q_r)^{\varepsilon_r})$ in $Q[X]$.  
\end{lemma}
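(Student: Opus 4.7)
The plan is to induct on the length $r$ of the freely reduced word $w \in QG$. The base case $r=0$ gives $Xw^R = X$, which is trivially in normal form. For the inductive step, let $w_{r-1} = X(q_1^{\varepsilon_1}\cdots q_{r-1}^{\varepsilon_{r-1}})^R$, which is fully reduced by hypothesis. Invoking the semisymmetric identity $u/q = qu$ (recorded in the Remark that $/$ and $\backslash$ coincide with the opposite of multiplication), the action of $\widetilde{R}(q_r)^{\varepsilon_r}$ on $w_{r-1}$ can be rewritten using only $\cdot$: one has $Xw^R = w_{r-1}\cdot q_r$ when $\varepsilon_r = +1$ and $Xw^R = q_r\cdot w_{r-1}$ when $\varepsilon_r = -1$. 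In either subcase, $Xw^R$ differs from $w_{r-1}$ only by the addition of one new outermost $\cdot$-node together with the fresh leaf $q_r$.

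Since $w_{r-1}$ is already fully reduced, any freshly applicable rewrite must involve this outermost node. Moreover, the only identities available for reducing normal-form $\cdot$-expressions in $\mathbf{P}$ are the two rules $(yx)y \to x$ and $y(xy) \to x$, since all four quasigroup axioms (IL), (IR), (SL), (SR) collapse to these via $y/x = xy = y\backslash x$. So it suffices to check that the outermost $\cdot$-node of $Xw^R$ matches neither template. Such a match would force the new leaf $q_r$ to coincide with a leaf of $w_{r-1}$ at a position dictated by the template. Unwinding $w_{r-1}$ one further step displays it as either $w_{r-2}\cdot q_{r-1}$ or $q_{r-1}\cdot w_{r-2}$, and a short case analysis on the four combinations of $(\varepsilon_{r-1}, \varepsilon_r)$ shows that the only leaf of $w_{r-1}$ that could clash with $q_r$ is $q_{r-1}$: the other candidate positions are occupied either by the subword $w_{r-2}$, which necessarily contains the indeterminate $X$ and so is not a single $Q$-leaf, or by a slot that would force $q_r\in Q$ itself to be a product, which is impossible as $q_r$ is atomic in $Q[X]$. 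The remaining clash $q_{r-1} = q_r$ arises only in configurations with $\varepsilon_{r-1} = -\varepsilon_r$, which is exactly the free-group cancellation pattern excluded by the hypothesis that $w$ is freely reduced in $QG$. This closes the induction.

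The principal obstacle I expect is the careful syntactic bookkeeping at the top two levels of the parse tree for $Xw^R$: one must track which positions hold leaves from $Q$, which hold the indeterminate $X$, and which hold larger subexpressions, and then verify that in each of the four $(\varepsilon_{r-1}, \varepsilon_r)$-cases the sole remaining obstruction to reducedness collapses into a free-group cancellation in $w$. Once the bookkeeping is organized, each subcase dispatches either by the inductive hypothesis or by the freeness of $w$.
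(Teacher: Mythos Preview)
Your argument is correct and rests on the same observation as the paper's: any semisymmetric reduction $(uv)u\to v$ or $u(vu)\to v$ in $Xw^R$ forces the repeated factor $u$ to be a single $Q$-leaf (since the factor containing $X$ cannot occur twice), and then the two adjacent letters of $w$ that produced it form a cancelling pair in $QG$. The paper packages this as a single direct contradiction on an arbitrary reducible subword rather than by your induction on $r$, but the content is identical.
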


\begin{proof}
Remark \ref{reductions} implies we need only rule out semisymmetric reductions of the form $(uv)u\to v$ or $u(vu)\to v$.  The cases where the length of $w$ is zero (the identity of $QG$) and one are obvious.  We are left to argue instances in which $r\geq 2$.  By way of contradiction, suppose $u$ and $v$ are nonempty subwords of $Xw^R$ and that so is $(uv)u.$  The bracketing implicit in $Xw^R$ demands $X$ be a subword of $(uv)u$; this is because the closing off of any set of parentheses includes a word that is the result of successive multiplications applied to $X$.  Furthermore, $X$ only appears once in $Xw^R,$ so it must be a subword of $v$, and $u$ is a single element of $Q$; call it $q$.  Let $0\leq s, t\leq r$ such that $\{L_i\}_{i<s}$ and $\{R_i\}_{i<t}$ are (potentially empty) subsequences of $1, \dots, r$ and $v=q_{L_1}\cdots q_{L_s}Xq_{R_1}\cdots q_{R_t}$ (bracketing implicit).  Then $(uv)u=(qv)q=(v\widetilde{R}(q)^{-1})\widetilde{R}(q)$, and $q^{-1}q$ is a subword of $w$, contradicting this free group word's irreducibility.  A similar argument shows that if we assume $u(vu)$ appears in $Xw^R$, then $qq^{-1}$ is a subword of $w$.  Conclude $Xw^R$ is fully reduced in $Q[X].$
\end{proof}

\begin{theorem}
\label{unimult}
Let $Q$ be a quasigroup in the variety $\mathbf{P}$ of semisymmetric quasigroups.  Then the universal multiplication group $\widetilde{G}=U(Q, \mathbf{P})$ is free over the set $\widetilde{R}(Q)=\{\widetilde{R}(q)\mid q\in Q\}$.    
\end{theorem}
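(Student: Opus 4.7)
The plan is to promote Lemma \ref{lemma1} to the desired freeness result by showing the homomorphism $R\colon QG \to \widetilde{G}$ defined in \eqref{iso1} is an isomorphism. First I would verify that $\widetilde{R}(Q)$ generates $\widetilde{G}$. Since $Q$ is semisymmetric, the identities in \eqref{semisymmid3} and \eqref{semisymmid4} give $L(q)^{-1}(x) = q\backslash x = xq = R(q)(x)$, so $L(q) = R(q)^{-1}$ in $Q!$. The same computation inside $Q[X]$ yields $\widetilde{L}(q) = \widetilde{R}(q)^{-1}$ for each $q \in Q$, so the generating set $\widetilde{R}(Q) \cup \widetilde{L}(Q)$ of $\widetilde{G} = \mathrm{Mlt}_{Q[X]}(Q)$ already lies in the subgroup generated by $\widetilde{R}(Q)$. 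Consequently, the homomorphism $R$ defined in \eqref{iso1} is surjective.

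Next I would show $R$ is injective. Let $w = q_1^{\varepsilon_1}\cdots q_r^{\varepsilon_r}$ be a nontrivial reduced word in $QG$ and suppose, for contradiction, that $w^R = 1$ in $\widetilde{G}$. Then $w^R$ fixes the indeterminate: $X w^R = X$ in $Q[X]$. On the other hand, Lemma \ref{lemma1} asserts that $Xw^R$ is already a fully reduced word in $Q[X]$; since $w$ is nonempty, this reduced word has length at least $2$ and therefore is distinct from the length-one reduced word $X$. Because fully reduced words in the coproduct $Q[X]$ constitute a normal form (Remark \ref{reductions}), the equation $Xw^R = X$ is impossible. Hence $\ker R$ is trivial, $R$ is a group isomorphism, and $\widetilde{G}$ is free over $\widetilde{R}(Q)$.

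The substantive work is essentially all in Lemma \ref{lemma1}; the only real obstacle at the level of the theorem is making sure that the lemma really does imply injectivity, i.e., that an element of $\widetilde{G}$ which acts trivially on all of $Q[X]$ must be the identity in the abstract group. This is automatic because $\widetilde{G}$ is by definition realized as a subgroup of the permutation group $Q[X]!$, so an element that fixes $X$ in particular has a well-defined image distinguished from the identity as soon as $Xw^R \neq X$. With surjectivity and injectivity of $R$ in hand, the theorem follows immediately.
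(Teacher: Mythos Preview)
Your proposal is correct and follows essentially the same approach as the paper: both argue that $R$ is surjective because the semisymmetric identities force $\widetilde{L}(q)=\widetilde{R}(q)^{-1}$, and both deduce injectivity from Lemma~\ref{lemma1} by observing that a nontrivial reduced $w\in QG$ would produce a fully reduced word $Xw^R\neq X$ in $Q[X]$. Your added remarks about normal forms and the permutation realization of $\widetilde{G}$ are sound elaborations but not substantively different.
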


\begin{proof}
It suffices to show that \eqref{iso1} bijects.  Begin by noticing that elements of $\widetilde{G}$ are of the form $\widetilde{E}(q_1)^{\varepsilon_1}\cdots \widetilde{E}(q_r)^{\varepsilon_r}$.  Due to \eqref{semisymmid3} and \eqref{semisymmid4}, $\widetilde{L}(q)^{-1}=\widetilde{R}(q)$ and $\widetilde{L}(q)=\widetilde{R}(q)^{-1}$ for all $q\in Q$.  Hence, $\widetilde{E}(q_1)^{\varepsilon_1}\cdots \widetilde{E}(q_r)^{\varepsilon_r}=\widetilde{R}(q_1)^{\varepsilon_1}\cdots \widetilde{R}(q_r)^{\varepsilon_r}$, which is clearly the image of $q_1^{\varepsilon_1}\cdots q_r^{\varepsilon_r}$ under $R$, so the map is surjective.  Next, we illustrate injectivity.  Suppose that $w=q_1^{\varepsilon_1}\cdots q_r^{\varepsilon_r}\in \text{Ker} R$.  By Lemma \ref{lemma1}, $X=Xw^R$ is fully reduced in $Q[X]$, so it must be the case that $r=0$, and $w=1.$   
\end{proof}

\subsection{Universal stabilizers}
We return to the setting of a quasigroup $Q$ in a general variety $\mathbf{V},$ noticing that since we identify $Q$ as a subset of $Q[X],$ we have an action of $U(Q; \mathbf{V})=\widetilde{G}$ on $Q$, and quasigroup axioms reveal this action to be transitive.  For a fixed element $e\in Q,$ we refer to its stabilizer $\widetilde{G}_e$ under this $\widetilde{G}$-action as \emph{the universal stabilizer} of $Q$ in $\mathbf{V}$.  Elements of $\widetilde{G}_e$ have a nice visual interpretation in the Cayley graph afforded by $\widetilde{G}\curvearrowright  Q[X]$.  Consider 
\begin{equation}
\label{Tstab}
\widetilde{T}_e(q)=\widetilde{R}(e\backslash q)\widetilde{L}(q/e)^{-1};
\end{equation}
\begin{equation}
\label{Rstab}
\widetilde{R}_e(q, r)=\widetilde{R}(e\backslash q)\widetilde{R}(r)\widetilde{R}(e\backslash qr)^{-1};
\end{equation}
\begin{equation}
\label{Lstab}
\widetilde{L}_e(q, r)=\widetilde{L}(q/e)\widetilde{L}(r)\widetilde{L}(rq/e)^{-1}.
\end{equation}
The action of \eqref{Tstab} on $e$ amounts to a circuit of two nodes starting at our base vertex $e$ and moving forwards along the edge of right multiplication by $e\backslash q$  to $q$ and returning to $e$ backwards along the edge of left multiplication by $q/e$.  The actions of \eqref{Rstab} and \eqref{Lstab} are three point trips, with the former moving from $e$ to $q$, then along to a possibly distinct point $qr$, and returning to $e$ backwards along the edge directing $e$ to $qr$ via right multiplication by $e\backslash(qr)$.  Then \eqref{Lstab} is a similar trip along arrows given by left multiplications.    
The circuits are pictured below.
\begin{displaymath}
\xymatrix{
& rq & \\
e \ar@/^/[ur]^{\widetilde{L}(rq/e)} \ar@/^/[rr]^{\widetilde{L}(q/e)} \ar@/_/[dr]_{\widetilde{R}(e\backslash qr)} \ar@/_/[rr]_{\widetilde{R}(e\backslash q)}  & & q \ar@/_/[ul]_{\widetilde{L}(r)} \ar@/^/[dl]^{\widetilde{R}(r)}  \\
& qr& 
}
\end{displaymath}

These elements of the stabilizer are prototypical in the sense that for a quasigroup $Q$ with base point $e$, $U(Q; \mathbf{Q})_e$ is free on $\{\widetilde{T}_e(q), \widetilde{R}_e(q, r), \widetilde{L}_e(q, r)\mid q, r\in Q\}$.  For a proof of this, see Theorem 2.3 of \cite{IQTR}.  Here, Schreier's Theorem is employed, adopting the notation of \cite{Trees}.  In our proof of Theorem \ref{ustab} below, we also follow this notation.  

\begin{theorem}
\label{ustab}
Let $Q$ be a nonempty, semisymmetric quasigroup in $\mathbf{P}$.  Fix $e\in Q$, and let $Q^\#=Q\smallsetminus\{e\}.$  Then the universal stabilizer $\widetilde{G}_e$ is free over the set 
\begin{equation}
\label{ustabbasis}
\left\{\widetilde{R}(e^2), \widetilde{R}(xe)\widetilde{R}(y)\widetilde{R}(xy\cdot e)^{-1}, \widetilde{R}(xe)\widetilde{R}(ex)\mid (x, y)\in Q^\#\times Q, y\neq ex\right\}.  
\end{equation}
If $Q$ is finite, of order $n$, then $\widetilde{G}_e\leq\widetilde{G}$ is a free subgroup of rank $n^2-n+1$. 
\end{theorem}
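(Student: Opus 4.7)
The plan is to invoke the Nielsen--Schreier subgroup theorem, taking advantage of Theorem~\ref{unimult}, which presents $\widetilde{G}$ as a free group on $\widetilde{R}(Q)$. Since the $\widetilde{G}$-action on $Q$ is transitive, the right cosets $\widetilde{G}_e \backslash \widetilde{G}$ biject with $Q$ via $\widetilde{G}_e g \mapsto eg$, so the index $[\widetilde{G}:\widetilde{G}_e]$ equals $|Q|$.

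I would begin by choosing a Schreier transversal tailored to semisymmetry. Identity~\eqref{semmisymmid2} with $y=e$ reads $e(qe)=q$, so $\widetilde{R}(qe)$ sends $e$ to $q$, motivating the choice
\[
T = \{1_{\widetilde{G}}\} \cup \{\widetilde{R}(qe) \mid q \in Q^\#\},
\]
with $1_{\widetilde{G}}$ representing the coset $\widetilde{G}_e$ itself. Each member of $T$ is a reduced word of length at most one in $\widetilde{R}(Q)$, so prefix-closure is immediate. Writing $\overline{g}$ for the unique $t \in T$ with $et = eg$, Schreier's theorem then identifies a free basis of $\widetilde{G}_e$ as the collection of non-identity words $t \cdot s \cdot \overline{ts}^{\,-1}$ for $(t,s) \in T \times \widetilde{R}(Q)$.

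Then I would run the case analysis. With $t = 1_{\widetilde{G}}$ and $s = \widetilde{R}(q)$, the product $ts$ moves $e$ to $eq$. Provided $eq \neq e$ we have $\overline{ts} = \widetilde{R}((eq)e)$, and since \eqref{semisymmid1} (with $y=e$, $x=q$) reads $(eq)e = q$, the Schreier generator collapses to the identity. The remaining case $eq = e$ yields, upon right-multiplication by $e$ and reapplication of $(eq)e = q$, the equation $q = e^2$; this produces the generator $\widetilde{R}(e^2)$. For $t = \widetilde{R}(xe)$ with $x \in Q^\#$ and $s = \widetilde{R}(y)$, the word $ts = \widetilde{R}(xe)\widetilde{R}(y)$ is reduced of length two in $\widetilde{G}$ and so cannot lie in $T$; it moves $e$ to $xy$. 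If $xy \neq e$, then $\overline{ts} = \widetilde{R}(xy \cdot e)$ and the Schreier generator is $\widetilde{R}(xe)\widetilde{R}(y)\widetilde{R}(xy \cdot e)^{-1}$. If $xy = e$, right-multiplying by $x$ gives $(xy)x = ex$, and another application of \eqref{semisymmid1} shows $(xy)x = y$, forcing $y = ex$; in this case $\overline{ts} = 1_{\widetilde{G}}$ and the generator becomes $\widetilde{R}(xe)\widetilde{R}(ex)$. These three families match the claimed basis.

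For the rank count when $|Q| = n$, summing the three families produces $1 + \bigl[(n-1)n - (n-1)\bigr] + (n-1) = n^2 - n + 1$. I expect the only real subtlety to be the bookkeeping of when $ts$ falls inside $T$, so that the associated Schreier generator is trivial; the key simplification, which comes for free from the length bound on $T$, is that any length-two reduced word — in particular every product arising in Case~(b) — is automatically outside $T$, while in Case~(a) triviality is dictated exactly by the exceptional identity $eq = e$.
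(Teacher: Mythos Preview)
Your proposal is correct and follows essentially the same route as the paper: both apply Schreier's theorem to the free group $\widetilde{G}$ using the same Schreier transversal (your $\{1\}\cup\{\widetilde{R}(qe)\mid q\in Q^\#\}$ equals the paper's $\{1\}\cup S\setminus\{\widetilde{R}(e^2)\}$, since $q\mapsto qe$ bijects $Q^\#$ onto $Q\setminus\{e^2\}$), and then run the identical case analysis on pairs $(t,s)$. The only cosmetic difference is that you count the rank by summing the three families directly, whereas the paper invokes the Schreier index formula; both yield $n^2-n+1$.
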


\begin{proof}
Suppose that  $S=\{\widetilde{R}(q)\mid q\in Q\}$ is a basis for $\widetilde{G}$.  Next, we let $T=\{1\}\cup S\setminus \{\widetilde{R}(e^2)\}$.  As defined, $T$ is a transversal to  $\widetilde{G}_e$ in $\widetilde{G}$.  Indeed, for distinct $x, y\in Q^\#$, $ye(e\cdot xe)=(ye)x\neq e;$ thus, $\widetilde{R}(xe)\widetilde{R}(ye)^{-1}\notin \widetilde{G}_e$, so the cosets represented by $T$ are distinct.  Moreover, since $T$ is in one-to-one correspondence with $Q$, and $\widetilde{G}$ acts transitively on $Q$, the quotient of $\widetilde{G}$ by the stabilizer $\widetilde{G}_e$ of $e$ in $Q$, $\widetilde{G}_e\backslash\widetilde{G}$, is in one-to-one correspondence with $T$; namely, $T$ is a complete set of coset representatives.  Since each element of $T$ is either a single element of $S$ or the identity, $T$ satisfies the partial product condition required by Schreier's Theorem.  Define $$W=\{(t, s)\in T\times S\mid ts\notin T\}=\{(1, \widetilde{R}(e^2)\}\cup\{(\widetilde{R}(xe), \widetilde{R}(y))\mid x\in Q^\#\}.$$  According to Schreier's Theorem, a set $$A=\{h_{t, s}=tsu^{-1}\mid (t, s)\in W\}$$ will be a basis for $\widetilde{G}_e$, where $u\in T$ is chosen so that $\widetilde{G}_ets=\widetilde{G}_eu.$  First, let $h_{1, \widetilde{R}(e^2)}=\widetilde{R}(e^2)$ (here, $\widetilde{R}(e^2)\in\widetilde{G}_e$, so we can choose $u=1$).  Now, fix $x\in Q^\#$, and consider $(\widetilde{R}(xe), \widetilde{R}(y))\in W.$  Suppose $y\neq ex$.  Then $xy\cdot e\neq x(ex)\cdot e=e^2,$ placing $\widetilde{R}(xy\cdot e)\in T.$  Since $\widetilde{R}(xe)\widetilde{R}(y)\widetilde{R}(xy\cdot e)^{-1}\in\widetilde{G}_e,$  $\widetilde{G}_e\widetilde{R}(xe)\widetilde{R}(y)=\widetilde{G}_e\widetilde{R}(xy\cdot e).$  Hence, in the case $y\neq ex$, we may define $h_{\widetilde{R}(xe)\widetilde{R}(y)}=\widetilde{R}(xe)\widetilde{R}(y)\widetilde{R}(xy\cdot e)^{-1}.$  But if $y=ex$ then we may choose $u=1$, for $\widetilde{R}(xe)\widetilde{R}(ex)\in\widetilde{G}_e.$  In other words, $h_{\widetilde{R}(xe)\widetilde{R}(y)}=\widetilde{R}(xe)\widetilde{R}(y)=\widetilde{R}(xe)\widetilde{R}(ex).$  Note $A$ is precisely the set \eqref{ustabbasis}.\\
\indent If $Q$ is of finite order $n$, $$\text{rank}(H)=\left|\widetilde{G}: \widetilde{G}_e\right|(n-1)+1=|T|(n-1)+1=n^2-n+1$$ follows by the Schreier index formula.
\end{proof}
Adapting \eqref{ustabbasis} to fit the notation of \eqref{Tstab}--\eqref{Lstab}, we conclude that the universal stabilizer of a quasigroup in $\mathbf{P}$ is free over 
\begin{equation}
\label{semiunistab}
\left\{\widetilde{R}(e^2), \widetilde{T}_e(x), \widetilde{R}_e(x, y)\mid (x, y)\in Q^{\#}\times Q, y\neq ex\right\}.
\end{equation}

\begin{example}
\label{Pustab}
Consider the MTS $P$ of Example \ref{MTSex}.  The universal multiplication group of $P$ in $\mathbf{P}$ is the rank-$3$ free group $\widetilde{G}=\langle\widetilde{R}(e), \widetilde{R}(a), \widetilde{R}(b)\rangle.$  The universal stabilizer is the free group of rank $7$ presented by 
\begin{equation}
\label{exunistab}
\widetilde{G}_e=\langle \widetilde{R}(e), \widetilde{T}_e(a), \widetilde{T}_e(b), \widetilde{R}_e(a, e), \widetilde{R}_e(a, a), \widetilde{R}_e(b, e), \widetilde{R}_e(b, b)\rangle.   
\end{equation}
We picture the circuits corresponding to $\widetilde{R}(e),$ $\widetilde{T}_e(a),$ and $\widetilde{R}_e(a, e)$ below:
\begin{displaymath}
\xymatrix{
e  \ar@(ul,dl)[]_{\widetilde{R}(e)} \ar@/^/[rr]^{\widetilde{R}(b)} \ar@/_/[ddr]_{\widetilde{R}(a)} & & a\ar@/^/[ll]^{\widetilde{R}(b)}\ar@/^/[ddl]^{\widetilde{R}(e)}\\
 & & \\
& b & 
}
\end{displaymath}
\end{example}


\section{Quasigroup modules}
\subsection{The Fundamental Theorem}
Recall from our Introduction that, for a quasigroup $Q,$ we defined a $Q$-module as an abelian group object in the slice category $\mathbf{Q}/Q$ of morphisms over $Q$.  We have finally gathered sufficient terminology to introduce an equivalent, and much more concrete, formulation of quasigroup module theory:

\begin{theorem}[The Fundamental Theorem of Quasigroup Representations]
\label{FTQR}
Let $Q$ be a quasigroup with element $e$.  Let $\widetilde{G}$ be the universal multiplication group $U(Q, \mathbf{Q})$ of $Q$ in the variety of all quasigroups.  Then $Q$-modules, as abelian group objects in the slice category $\mathbf{Q}/Q$, are equivalent to modules over the universal stabilizer $\widetilde{G}_e$.   
\end{theorem}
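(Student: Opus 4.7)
The plan is to construct mutually inverse functors between the category of abelian group objects in $\mathbf{Q}/Q$ and the category of right $\widetilde{G}_e$-modules, following the template Smith sets up in \cite{IQTR}, Section 10.3. In the forward direction, I start with an abelian group object $(\pi: E \to Q, \eta, +_Q, -_Q)$ and set $M := \pi^{-1}(e)$; the pullback addition $+_Q$ restricted to the fiber over $e$ makes $M$ an abelian group with zero $e^\eta$. To equip $M$ with a $\widetilde{G}_e$-action, I invoke the universal property of $Q[X]$: for each $m \in M$, the morphism $\eta: Q \to E$ together with the assignment $X \mapsto m$ extends uniquely to a $\mathbf{Q}$-morphism $\phi_m: Q[X] \to E$. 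For $\tau \in \widetilde{G}_e$, the element $X\tau \in Q[X]$ lies in the fiber of $\iota_Q$ over $e \cdot \tau = e$, so $m \cdot \tau := \phi_m(X\tau)$ lies in $M$. The group action axioms follow from the uniqueness clause of the universal property applied to products of multiplications, while additivity in $m$ uses that $+_Q$ is a $\mathbf{Q}$-morphism over $Q$ and so commutes with the $\phi_m$ construction.

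For the reverse direction, given a right $\widetilde{G}_e$-module $M$, I build the split extension $E = Q \times M$ using a Schreier-style cocycle: fix a transversal $\{t_q \mid q \in Q\}$ for $\widetilde{G}_e \backslash \widetilde{G}$ with $e \cdot t_q = q$, and for $g \in \widetilde{G}$ and $q \in Q$ define $\sigma(g, q) \in \widetilde{G}_e$ by the factorization $t_q g = \sigma(g, q)\, t_{q \cdot g}$. I set
$$(q_1, m_1)\cdot_E (q_2, m_2) := \bigl(q_1 q_2,\; m_1\, \sigma(\widetilde{R}(q_2), q_1) + m_2\, \sigma(\widetilde{L}(q_1), q_2)\bigr),$$
and define $/$ and $\backslash$ in $E$ by the analogous formulas with $\widetilde{R}(q)^{-1}$ and $\widetilde{L}(q)^{-1}$ in place of $\widetilde{R}(q)$ and $\widetilde{L}(q)$. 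Verification that $(E, \cdot_E, /_E, \backslash_E)$ satisfies (IL)--(SR) reduces to relations among $\widetilde{R}(q), \widetilde{L}(q)$ in $\widetilde{G}$ that are automatic from the way these multiplications act on $Q[X]$; the abelian group object structure on $\pi: E \to Q$ is then inherited fiberwise from that of $M$.

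The main obstacle is showing these functors are mutually quasi-inverse. The composite $\Phi \circ \Psi$ is easier: the fiber over $e$ in the constructed $E$ is canonically $M$, and the recovered action on $M$ coincides with the original once one checks that $\sigma$ restricted to the stabilizer behaves as the identity on $\widetilde{G}_e$. The harder direction is $\Psi \circ \Phi \cong \mathrm{id}$: to show an arbitrary abelian group object $\pi: E \to Q$ is isomorphic, over $Q$, to the split extension built from $M := \pi^{-1}(e)$, I propose the candidate map $\Theta: Q \times M \to E$ given by $\Theta(q, m) := \phi_m(X t_q)$, exploiting the transitivity of $\widetilde{G}$ on $Q$. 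That $\Theta$ is bijective follows because each fiber of $\pi$ is a torsor under $M$ via the translation structure of the abelian group object; that $\Theta$ is a $\mathbf{Q}$-morphism, and independent of the transversal $\{t_q\}$, is forced by the universal property of $Q[X]$ together with the compatibility of $\phi_m$ with all of $\widetilde{G}$, not merely the cocycle values. This last compatibility check is the delicate point of the argument and is where the full strength of $Q[X]$ as a $\mathbf{Q}$-coproduct is needed.
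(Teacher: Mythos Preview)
Your proposal is correct and follows the same approach as the paper's sketch (which defers the full argument to \cite{IQTR}, Section 10.3): the paper likewise takes $M=\pi^{-1}\{e\}$ with the $\widetilde{G}_e$-action obtained from the projection $\widetilde{G}\to\text{Mlt}_E(Q)$---equivalent to your $\phi_m$ construction via the universal property of $Q[X]$---and in the reverse direction builds $E=M\times Q$ by inducing $M$ to a $\widetilde{G}$-module through a transversal, which is exactly your Schreier cocycle $\sigma$, the linearized operations \eqref{linear} matching your $\cdot_E$. The paper does not spell out the mutual-inverse verification, so your outline actually supplies more detail than what appears here.
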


The reader will find a proof of the Fundamental Theorem in Section 10.3 of \cite{IQTR}, and although we refrain from reproducing it in full, we will review some of its aspects that we feel help to uncover the rationale of the theorem. \\  \indent Let us outline how, given a quasigroup $Q$ of $\mathbf{Q}$ with element $e$ and a $\widetilde{G}_e$-module $M$, an abelian group object $\pi: E\to Q$ of $\mathbf{Q}/Q$ may be established.  First, we can induce up from the universal stabilizer to give $M$ the structure of a $\widetilde{G}$-module, and the choice of transversal by which we perform the induction makes $E=M\times Q$ a $\widetilde{G}$-set.  Establish a local abelian group structure on $E$ by 
\begin{equation}
\label{localgp}
(m_1, q)-(m_2, q)=(m_1-m_2, q).
\end{equation}
Let $\pi:E\to Q,$ be the projection onto $Q$.  Having to situate $E$ in $\mathbf{Q},$ we note that
\begin{equation}
\label{linear}
\begin{cases}
a\cdot b=a\widetilde{R}(b^\pi)+b\widetilde{L}(a^\pi),\\
a/b= (a-b\widetilde{L}(a^\pi/b^\pi))\widetilde{R}(b^\pi)^{-1},\\
a\backslash b = (b-a\widetilde{R}(a^\pi\backslash b^\pi))\widetilde{L}(a^\pi)^{-1},
\end{cases}
\end{equation}
provides a linearized quasigroup structure on $M\times Q$.  \\
\indent Conversely, if $\pi:E\to Q$ is an abelian group in $\mathbf{Q}/Q$ with addition morphism $+:E\times_Q E\to E$, then $M=\pi^{-1}\{e\}$ forms in abelian group under the restriction of $+$.  Moreover, $0:Q\to E$ injects, making $Q$ a subquasigroup of $E;$ functoriality of $U(-; \mathbf{Q})$ then ensures a projection $\widetilde{G}\to \text{Mlt}_E(Q)$ by which $\widetilde{G}$ acts on $M$, and we can restrict this action to one of $\widetilde{G}_e$ on $M$.    

\subsection{A relativized Fundamental Theorem}
We now discuss a version of Theorem \ref{FTQR} that allows us to realize modules over quasigroups in more structurally specific varieties.  Suppose $Q$ belongs to a variety of quasigroups $\mathbf{V}$.  Such a variety is specified as a subvariety of $\mathbf{Q}$ by a \emph{relative equational basis} of identities.  For example,  
\begin{equation}
\label{semiid}
(yx)y=x
\end{equation}
provides a relative equational basis for $\mathbf{P}$ in $\mathbf{Q}$.  Adding
\begin{equation}
\label{idempo}
x^2=x
\end{equation}
to our equational basis situates us within the variety of Mendelsohn triple systems $\mathbf{MTS}$ introduced in Section 2.2.  The philosophy behind a relativized module theory is to take the ring $\mathbb{Z}\widetilde{G}_e$, that which generates the category of $Q$-modules in $\mathbf{Q}$, and quotient out by an ideal generated by linearized versions of the quasigroup words forming the relative equational basis of $\mathbf{V}$ in $\mathbf{Q}$.  To get a feel for how this works, let's work through this linearization process when attempting to represent the trivial quasigroup $\{e\},$ as an object in $\mathbf{P}$.
\begin{example}
\label{banal}
Let $Q$ denote the trivial quasigroup $\{e\}$.  Theorems \ref{unimult}, \ref{ustab} give $\widetilde{G}=U(Q; \mathbf{P})=\langle \widetilde{R}(e)\rangle\cong\langle \widetilde{R}(e^2)\rangle=\widetilde{G}_e$.  In accordance with \eqref{linear}, define $x\cdot y=x\widetilde{R}(e)+y\widetilde{L}(e)$ whenever $x, y\in\mathbb{Z}\widetilde{G}_e.$  We wish to reflect the identity $(yx)y=x$ in $\mathbb{Z}\widetilde{G}_e,$ so this means that 
\begin{align*}
x&=(y\widetilde{R}(e)+x\widetilde{L}(e))\cdot y\\
&=y\widetilde{R}(e)R(e)+x\widetilde{L}(e)\widetilde{R}(e)+y\widetilde{L}(e)
\end{align*}
for all $x, y\in \mathbb{Z}\widetilde{G}_e,$ including $y=0$.  Therefore, $x=x\widetilde{L}(e)\widetilde{R}(e),$ so as we suspected, $\widetilde{R}(e)=\widetilde{L}(e)^{-1}$.  Setting $x=0$ yields \begin{align*}
0&=y\widetilde{R}(e)^2+y\widetilde{L}(e)\\
&=y\widetilde{R}(e)^2+y\widetilde{R}(e)^{-1}.
\end{align*}  
That is, 
\begin{equation}
\label{linsemiid}
\widetilde{R}(e)^3+1=0 
\end{equation}
in our desired ring of representation.  Conversely, a routine calculation verifies that the linearized multiplication of \eqref{linear} in 
\begin{equation}
\label{banalrepring}
\mathbb{Z}\widetilde{G}_e/(\widetilde{R}(e)^3+1)
\end{equation}
is semisymmetric.  Thus, modules over the one-element semisymmetric quasigroup are equivalent to modules over the quotient ring \ref{banalrepring}, which may be more familiar to the reader as a quotient ring $\mathbb{Z}[X, X^{-1}]/(X^3+1)$ of integral Laurent polynomials .  
\end{example}
Hinted at in \eqref{linear} and Example \ref{banal} is a derivation $\frac{\partial}{\partial x}:Q^n\to \mathbb{Z}\widetilde{G}$ on quasigroup words.  Here, we are employing an infix notation.  A quasigroup word is given by $x_1\dots x_nw,$ where $x_1, \dots, x_n$ denote quasigroup elements called the \emph{arguments} of $w$.  For example, we might denote the word $u=(yx)y$ by $x_1x_2u,$ implying that it only takes two elements as arguments.  Section 10.4 of \cite{IQTR} provides a nice exposition of differentiation in quasigroups.  For our purposes, we need only note that
\begin{equation}
\label{prodrule}
\frac{\partial u\cdot v}{\partial x_j}=\frac{\partial u}{\partial x_j}\tilde R(v)+\frac{\partial v}{\partial x_j}\tilde L(u), 
\end{equation}
and
\begin{equation}
\label{deltarule}
\frac{\partial x_i}{\partial x_j}=\delta_{ij} 
\end{equation}
for quasigroup words $w=u\cdot v$, $x_i, x_j\in Q$.  As a final matter of bookkeeping, for quasigroup elements $x, y\in Q,$ define $\rho(x, y)=R(x\backslash x)^{-1}R(x\backslash y).$  The relativized theorem, appearing below, is proved in Section 10.5 of \cite{IQTR}.

\begin{theorem}[Fundamental Theorem for Representations in Varieties]
\label{RFTQR}
Let $Q$ be a nonempty quasigroup in a variety $\mathbf{V}$.  Suppose $B$ is a relative equational basis for $\mathbf{V}$.  Then the category $\mathbb{Z}\otimes \mathbf{V}/Q$ of $Q$-modules in $\mathbf{V}/Q$ is equivalent to the category of modules over the ring $\mathbb{Z}\mathbf{V}Q,$ defined as the quotient of $\mathbb{Z}\widetilde{G}_e$ by the ideal $J\mathbb{Z}\widetilde{G}_e$ generated by
\begin{equation}
\label{J}
\rho(e, q_h)\left(\frac{\partial u}{\partial x_h}(q_1, \dots, q_n)-\frac{\partial v}{\partial x_h}(q_1, \dots, q_n)\right)\rho(e, q_1\cdots q_nu)^{-1}
\end{equation}
for all $u=v$ in $B$, and all $q_h\in Q$.
\end{theorem}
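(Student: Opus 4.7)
The plan is to derive Theorem \ref{RFTQR} from the unrestricted Fundamental Theorem \ref{FTQR} by carving out, inside the category of $\mathbb{Z}\widetilde{G}_e$-modules, the full subcategory corresponding to $Q$-modules that satisfy the additional identities defining $\mathbf{V}$. Given a $\mathbb{Z}\widetilde{G}_e$-module $M$, Theorem \ref{FTQR} already produces an abelian group object $\pi: E \to Q$ in $\mathbf{Q}/Q$, where $E = M \times Q$ is endowed with the linearized quasigroup structure \eqref{linear}. Since $\mathbf{V}$ is cut out of $\mathbf{Q}$ precisely by the identities of $B$, the object $E$ lies over $Q$ in $\mathbf{V}$ if and only if every $u = v$ from $B$ holds identically in $E$.

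To translate this family of identities into ring-theoretic relations, I would fix $u(x_1, \ldots, x_n) = v(x_1, \ldots, x_n)$ in $B$, substitute $a_h = (m_h, q_h) \in E$, and observe that because $\pi$ is a $\mathbf{Q}$-homomorphism and $Q \in \mathbf{V}$, the $Q$-components of $u(a_1, \ldots, a_n)$ and $v(a_1, \ldots, a_n)$ automatically agree, both equal to $q_1 \cdots q_n u$. Verifying $u = v$ in $E$ therefore reduces to an equality of $M$-components in a single fiber. An induction on word length, powered by the product rule \eqref{prodrule} and the delta rule \eqref{deltarule}, then shows that the $M$-component of $w(a_1, \ldots, a_n)$ depends $\mathbb{Z}$-linearly on the $m_h$ (the constant term vanishes because $q \mapsto (0,q)$ embeds $Q$ as a subquasigroup of $E$), and that the coefficient of $m_h$ is the action of $\frac{\partial w}{\partial x_h}(q_1, \ldots, q_n) \in \mathbb{Z}\widetilde{G}$. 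This is the central calculation; the product rule is precisely the Leibniz-style bilinearity baked into \eqref{linear}. Consequently, the identity $u = v$ on $E$ becomes the linear requirement
\[
\sum_{h=1}^{n} m_h \left(\frac{\partial u}{\partial x_h}(q_1, \ldots, q_n) - \frac{\partial v}{\partial x_h}(q_1, \ldots, q_n)\right) = 0
\]
in the fiber over $q_1 \cdots q_n u$, valid for all $m_h \in M$ and $q_h \in Q$.

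The remaining step is to recast this condition as annihilation by an ideal of $\mathbb{Z}\widetilde{G}_e$ rather than $\mathbb{Z}\widetilde{G}$. Each $m_h$ is native to the fiber over $e$ and must be transported to the fiber over $q_h$ before $\frac{\partial u}{\partial x_h}$ acts; the resulting element then needs to be pulled back from the fiber over $q_1\cdots q_n u$ to the fiber over $e$. The operators $\rho(e, q_h)$ and $\rho(e, q_1\cdots q_n u)^{-1}$ perform exactly these two bookkeeping transports, so conjugating the derivative between them yields an element of $\widetilde{G}_e$ of the precise form \eqref{J}. The identity $u = v$ holds in $E$ for every choice of parameters if and only if $M$ is annihilated by each such element, i.e., by the ideal $J\mathbb{Z}\widetilde{G}_e$. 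I expect the main obstacle to lie in this final step: one must carefully verify that the $\rho$-conjugation tracks the transversal used in the induction of Theorem \ref{FTQR} from $\widetilde{G}_e$ up to $\widetilde{G}$, and confirm that $\rho(e,q_h) \frac{\partial u}{\partial x_h}(q_1,\ldots,q_n) \rho(e, q_1\cdots q_n u)^{-1}$ genuinely stabilizes $e$ (which follows from the fact that $\rho(x,y)$ sends $x$ to $y$ and that $\frac{\partial w}{\partial x_h}$ maps $q_h$ to $q_1 \cdots q_n w$). Once this bookkeeping is pinned down, the reverse direction and functoriality of the equivalence follow formally from Theorem \ref{FTQR}.
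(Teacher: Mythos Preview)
The paper does not actually prove Theorem~\ref{RFTQR}; immediately before stating it, the author writes that the result ``is proved in Section~10.5 of \cite{IQTR},'' and no argument is given in the paper itself. So there is no in-paper proof to compare your proposal against.

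That said, your outline is a faithful reconstruction of the standard argument (and is consistent with the surrounding exposition, particularly Example~\ref{banal} and the differentiation formalism of \eqref{prodrule}--\eqref{deltarule}). The three moves you describe---(i) reducing membership of $E$ in $\mathbf{V}$ to equality of $M$-components in each fiber, (ii) showing by induction on word length that the $M$-component of $w(a_1,\ldots,a_n)$ is $\sum_h m_h\,\frac{\partial w}{\partial x_h}(q_1,\ldots,q_n)$, and (iii) conjugating by the $\rho$-operators to land in $\widetilde{G}_e$---are exactly the ingredients of the proof in \cite{IQTR}. Your identification of the delicate point is also correct: the compatibility of the $\rho$-transports with the transversal used in the induction $M\mapsto M\otimes_{\mathbb{Z}\widetilde{G}_e}\mathbb{Z}\widetilde{G}$ is where the bookkeeping must be done carefully, and your observation that $\rho(e,q_h)\,\frac{\partial w}{\partial x_h}\,\rho(e,q_1\cdots q_n w)^{-1}$ stabilizes $e$ is the right way to see that the generators \eqref{J} lie in $\mathbb{Z}\widetilde{G}_e$.
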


\indent Given a quasigroup word $x_1\dots x_nw,$ an argument $x_h$ is said to occur \emph{uniquely above the line} in $w$ if it appears once in $w$, it appears to the left of any right divisions $/$, and to the right of any left divisions $\backslash$.  By Proposition 10.4 of \cite{IQTR}, any argument occurring uniquely above the line in $u$ and $v$, for $u=v$ in a relative equational basis $B$, does not contribute to the ideal $J\mathbb{Z}\widetilde{G}_e.$  An immediate consequence of this principle is the revelation that group module theory fits within our general theory of quasigroup modules.

\begin{example}
Let $Q$ be a group in $\mathbf{Gp}$.  Recall Example \ref{gpumult}, in which we noted $\widetilde{G}\cong Q\times Q.$  Choosing the identity element $e\in Q$ as our base point, the universal stabilizer is seen to be the diagonal $\widetilde{G}_e=\{(q, q)\mid q\in Q\}\cong Q$.    Now $(x_1x_2)x_3=x_1(x_2x_3)$ is a relative equational basis for $\mathbf{Gp}$.  Each argument of this basis occurs uniquely above the line, so $\mathbb{Z}\mathbf{Gp}Q\cong \mathbb{Z}Q/(0)\cong\mathbb{Z}Q,$ recovering the traditional notion of group module.
\end{example}

\subsection{Representations in the variety $\mathbf{P}$}
In order to determine the ring $\mathbb{Z}\mathbf{P}Q$, whose modules furnish representations of a semisymmetric quasigroup $Q$, we differentiate $(yx)y=x$ as a relative equational basis for $\mathbf{P}$.  Notice that $x$ occurs uniquely above the line; therefore, differentiation with respect to $x$ makes no contribution to $J\mathbb{Z}\widetilde{G}_e$.  Furthermore, 
\begin{align*}
\frac{\partial(yx\cdot y)}{\partial y}&=\frac{\partial(yx)}{\partial y}\widetilde{R}(y)+\frac{\partial y}{\partial y}\widetilde{L}(yx)\\
&=\left(\frac{\partial y}{\partial y}\widetilde{R}(x)+\frac{\partial x}{\partial y}\widetilde{L}(y)\right)\widetilde{R}(y) + \widetilde{R}(yx)^{-1}\\
&=\widetilde{R}(x)\widetilde{R}(y)+ \widetilde{R}(yx)^{-1}.
\end{align*}
Note $\partial x/\partial y=0.$  For $x\in Q$, $\rho(e, x)=R(e\backslash e)^{-1}R(e\backslash x)=\widetilde{R}(e^2)^{-1}\widetilde{R}(xe),$ and, hence, $J\mathbb{Z}\widetilde{G}_e$ is generated by 
\begin{align*}
\rho(e, y)&\left(\widetilde{R}(x)\widetilde{R}(y)+ \widetilde{R}(yx)^{-1}\right)\rho(e, x)^{-1}\\
&=\widetilde{R}(e^2)^{-1}\widetilde{R}(ye)\left(\widetilde{R}(x)\widetilde{R}(y)+ \widetilde{R}(yx)^{-1}\right)\widetilde{R}(xe)^{-1}\widetilde{R}(e^2),
\end{align*}
for $x, y\in Q,$ which is equivalent to the ideal of $\mathbb{Z}\widetilde{G}_e$ generated by $$\widetilde{R}(ye)\left(\widetilde{R}(x)\widetilde{R}(y)+ \widetilde{R}(yx)^{-1}\right)\widetilde{R}(xe)^{-1}$$ whence, our main result:

\begin{theorem}
\label{mainresult}
Let $Q$ be a nonempty, semisymmetric quasigroup.  Suppose $\widetilde{G}=U(Q; \mathbf{P})$, and $e\in Q$, so that $\widetilde{G}_e$ is the universal stabilizer of $Q$ in $\mathbf{P}$.  Let $J\mathbb{Z}\widetilde{G}_e$ be the two-sided ideal of the integral group ring $\mathbb{Z}\widetilde{G}_e$ generated by $$\{\widetilde{R}(ye)(\widetilde{R}(x)\widetilde{R}(y)+\widetilde{R}(yx)^{-1})\widetilde{R}(xe)^{-1}\mid x, y\in Q\}.$$   Then $Q$-modules, as abelian group objects in the slice category $\mathbf{P}/Q$, are equivalent to modules over the quotient ring $\mathbb{Z}\widetilde{G}_e/J\mathbb{Z}\widetilde{G}_e$.
\end{theorem}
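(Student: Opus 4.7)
The plan is to invoke the Fundamental Theorem for Representations in Varieties (Theorem \ref{RFTQR}) with the relative equational basis $B = \{(yx)y = x\}$ for $\mathbf{P}$ in $\mathbf{Q}$, and then explicitly compute the ideal $J\mathbb{Z}\widetilde{G}_e$ it prescribes. Because Theorem \ref{unimult} tells us $\widetilde{G}$ is free on $\widetilde{R}(Q)$, and the semisymmetric identities \eqref{semisymmid3}--\eqref{semisymmid4} give the collapse $\widetilde{L}(q) = \widetilde{R}(q)^{-1}$, every expression arising from formula \eqref{J} can be rewritten purely in terms of right-multiplication generators. So the proof reduces to a single mechanical computation followed by a cosmetic simplification.

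My first step is to rule out one of the two variables. In both sides of $(yx)y = x$, the variable $x$ appears exactly once, neither to the right of a left-division nor to the left of a right-division, so $x$ occurs uniquely above the line. By the principle cited from \cite{IQTR} immediately after Theorem \ref{RFTQR}, derivatives with respect to $x$ contribute nothing to $J\mathbb{Z}\widetilde{G}_e$; only $\partial/\partial y$ need be computed.

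Next, I would carry out that differentiation. Applying the product rule \eqref{prodrule} twice to $u = (yx)\cdot y$, using \eqref{deltarule} to set $\partial y/\partial y = 1$ and $\partial x/\partial y = 0$, and collapsing $\widetilde{L}(yx)$ to $\widetilde{R}(yx)^{-1}$ by semisymmetry, should produce
\[
\frac{\partial u}{\partial y} = \widetilde{R}(x)\widetilde{R}(y) + \widetilde{R}(yx)^{-1},
\]
while the right-hand side $v = x$ contributes $\partial v/\partial y = 0$. Computing $\rho(e, q) = \widetilde{R}(e^2)^{-1}\widetilde{R}(qe)$ (once more via $e\backslash q = qe$) and substituting into \eqref{J} then gives a family of generators of the form
\[
\widetilde{R}(e^2)^{-1}\widetilde{R}(ye)\left(\widetilde{R}(x)\widetilde{R}(y) + \widetilde{R}(yx)^{-1}\right)\widetilde{R}(xe)^{-1}\widetilde{R}(e^2),
\]
indexed by $x, y \in Q$.

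The final step is cosmetic: since $\widetilde{R}(e^2)$ is a unit of the integral group ring, conjugation by it preserves two-sided ideals, so the displayed generators may be replaced by $\widetilde{R}(ye)(\widetilde{R}(x)\widetilde{R}(y) + \widetilde{R}(yx)^{-1})\widetilde{R}(xe)^{-1}$, which matches the statement of the theorem. The equivalence of categories then follows directly from Theorem \ref{RFTQR}. The main place where care is needed is threading the product rule through the nested multiplication $y\cdot x$ inside $(yx)\cdot y$ without dropping a factor or misidentifying the arguments of $\widetilde{R}$ and $\widetilde{L}$; everything else is a routine specialization of the general machinery.
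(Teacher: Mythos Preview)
Your proposal is correct and follows essentially the same argument as the paper: invoke Theorem~\ref{RFTQR}, discard the $x$-derivative via the uniquely-above-the-line criterion, compute $\partial u/\partial y$ with the product rule and the collapse $\widetilde{L}(q)=\widetilde{R}(q)^{-1}$, plug in $\rho(e,q)=\widetilde{R}(e^2)^{-1}\widetilde{R}(qe)$, and strip the outer $\widetilde{R}(e^2)$-conjugation using that $\widetilde{R}(e^2)\in\widetilde{G}_e$ is a unit. The paper presents the same chain of computations in the same order.
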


\subsection{Extensions of semisymmetric quasigroups}
Theorem \ref{mainresult} gives us a recipe for extending semisymmetric quasigroups.  We conclude with some examples of such extensions.

\begin{example}[The trivial quasigroup]
\label{trivialex}
This example is introduced in \cite{ImKoSm}.  We reproduce it here because of the nice connection it makes between representation theory and semisymmetrization.  Consider the trivial quasigroup $Q=\{e\}$ as semisymmetric.  Demonstrated in \ref{banal} is the fact that the ring of representation for $Q$ in $\mathbf{P}$ becomes $\mathbb{Z}\widetilde{G}_e/(\widetilde{R}(e)^3+1)\cong\mathbb{Z}[X, X^{-1}]/(X^3+1)$.    \\
\indent Let $(A, +, 0)$ be an abelian group; a quasigroup structure on $A$ is given by $(A, +, -, \sim),$ where $x\sim y=y-x$ is the opposite of subtraction.  The semisymmetrization $A^{\Delta}$ is defined on $A^3$ with product $$[x_1, x_2, x_3]\cdot[y_1, y_2, y_3]=[y_3-x_2, x_3-y_1, x_1+y_2].$$  This operation has a matrix interpretation: 
\begin{equation}
\label{Pmateq}
[x_1, x_2, x_3]\cdot [y_1, y_2, y_3]=[x_1, x_2, x_3]E+[y_1, y_2, y_3]E^{-1},
\end{equation}
where  
\begin{equation}
\label{Pmat}
E=\left[
\begin{array}{ccc}
0 & 0 & 1\\
-1 & 0 & 0\\
0 & 1 & 0
\end{array}
\right].
\end{equation}
Notice that $E\in \text{Aut}(A^3)$, and that $E^3+I_3=0$, making $A^3$ a $\mathbb{Z}\widetilde{G}_e/(\widetilde{R}(e)^3+1)$-module, and $A^\Delta$ the split extension on $Q\times A^3=\{e\}\times A^3$, with \eqref{Pmateq} tracking the linearized product of \eqref{linear}.
\end{example}

\begin{example}
Consider $P=\{e, a, b\}$ of Examples \ref{MTSex} and \ref{Pustab}.  Let $\mathbb{F}_3$ be the field of order 3.  We demonstrate that the abelian group structure on the direct product $\mathbb{F}^3_3$ furnishes a module over $P$.  Applying Theorem \ref{mainresult}, we obtain 
\begin{align*}
J\mathbb{Z}\widetilde{G}_e=(\widetilde{R}(e)^3+1&, \widetilde{R}(a)\widetilde{R}(b)^3\widetilde{R}(a)^{-1}+1, \widetilde{R}(b)\widetilde{R}(a)^3\widetilde{R}(b)^{-1}+1,\\
&\widetilde{R}(b)\widetilde{R}(e)\widetilde{R}(a)+1, \widetilde{R}(a)\widetilde{R}(e)\widetilde{R}(b)+1).
\end{align*}
 The assignments
\begin{equation}
\label{E}
\widetilde{R}(e)\mapsto \left[\begin{array}{ccc}
0 & 0 & 1\\
-1 & 0 & 0\\
0 & 1 & 0
\end{array}\right]=:E;
\end{equation}
\begin{equation}
\label{A}
\widetilde{R}(a)\mapsto \left[\begin{array}{ccc}
0 & 1 & -1\\
1 & 0 & -1\\
-1 & -1 & 0
\end{array}\right]=:A;
\end{equation}
\begin{equation}
\label{B}
\widetilde{R}(b)\mapsto \left[\begin{array}{ccc}
-1 & 1 & 1\\
-1 & -1 & -1\\
-1 & 1 & -1
\end{array}\right]=:B,
\end{equation}
specify a map $\{\widetilde{R}(e), \widetilde{R}(a), \widetilde{R}(b)\}\to \text{Aut}(\mathbb{F}^3_3),$ which extends to a group homomorphism $\rho:\widetilde{G}\to \text{Aut}(\mathbb{F}^3_3),$ making $\mathbb{F}^3_3$ a $\widetilde{G}$-module.  Take $\rho_e:\widetilde{G}_e\to \text{Aut}(\mathbb{F}^3_3)$ to be the restriction of the representation $\rho$, yielding a $\mathbb{Z}\widetilde{G}_e$-module.  In order to guarantee that $\rho_e$ may also be interpreted as a $\mathbb{Z}\widetilde{G}_e/J\mathbb{Z}\widetilde{G}_e$-module, we have to ensure that $J\mathbb{Z}\widetilde{G}_e$ annihilates $\mathbb{F}^3_3$.  Indeed, elementary calculations verify that $E$, $A$, and $B$ are order-$6$ elements of $\mathsf{GL}_3(\mathbb{F}_3),$ and so the first three generators of $J\mathbb{Z}\widetilde{G}_e$ annihilate our proposed module; moreover, $BEA=AEB=-I_3$, satisfying the latter two generators of the ideal.  We conclude, then, that $(Q, \cdot, \circ, \circ)$ is a semisymmetric quasigroup, where $Q=\mathbb{F}^3_3\times P,$ and we define the multiplication $$([x_1, x_2, x_3], p)\cdot ([y_1, y_2, y_3], q)=([x_1, x_2, x_3]\widetilde{R}(q)+[y_1, y_2, y_3]\widetilde{R}(p)^{-1}, pq),$$ and $\circ$ to be its opposite.           
\end{example}

\section*{Acknowledgements}
The author is very grateful to his advisor, Jonathan D. H. Smith, for introducing him to the topic and for guiding him through the preparation of both this paper and the talk that inspired it.

\bibliographystyle{amsplain}

\end{document}